\DeclareMathOperator{\sgn}{sgn}
\newcommand{\C}{\mathbf{C}}
\newtheorem{theorem}{Theorem}
\newtheorem{corollary} {Corollary}
\newtheorem{lemma}{Lemma}
\theoremstyle{definition}
\theoremstyle{definition}
\newtheorem{remark}{Remark}
\theoremstyle{definition}
\newtheorem{example}{Example}
\numberwithin{equation}{section}
\numberwithin{table}{section}
\numberwithin{figure}{section}
\newcommand{\CC}{\mathbf{C}}
\DeclareMathAlphabet{\pazocal}{OMS}{zplm}{m}{n}
\numberwithin{equation}{section}
\title{Complex oscillations of non-definite Sturm-Liouville problems, II }
\newcommand{\doi}[1]{\url{https://doi.org/#1}}
\author{Angelo B. Mingarelli}
\address{School of Mathematics and Statistics, Carleton University, Ottawa, Canada, K1S 4P4}
\email{angelo@math.carleton.ca}
\urladdr{https://people.math.carleton.ca/~angelo/} 
\renewcommand{\maketitle}{\bgroup\setlength{\parindent}{0pt}

\vspace{1truecm}
\begin{center}{\vbox{\titlefont\@title}}\end{center}
\vspace{0.5truecm}
\begin{center}{\@author} \end{center}

\egroup
}
\renewcommand{\@fnsymbol}[1]{%
    \ifcase#1 \or {\,\Letter\!} \or\textasteriskcentered\or \textasteriskcentered\textasteriskcentered 
    \else\@ctrerr\fi}
\newcommand*{\titlefont}{\fontsize{18}{21.6}\selectfont\textbf}
\begin{document}
\maketitle
\pagestyle{plain}

\begin{center}
\noindent
\begin{minipage}{0.85\textwidth}\parindent=15.5pt


{\small{
\noindent {\bf Abstract.} 
We correct and update a result of R.G.D. Richardson \cite{Rich18} dealing with the separation of zeros of the real and imaginary parts of non-real eigenfunctions of non-definite Sturm-Liouville eigenvalue problems. We then extend it to the case where the weight function is allowed to be identically zero on a subinterval that excludes the end-points and study the behavior of the zeros of the real and imaginary parts when the end-points are included. Examples are given illustrating the sharpness of the results along with open questions.

\smallskip

\noindent {\bf{Keywords:}} Sturm-Liouville, eigenvalues, non-definite, complex ghosts, oscillations, zeros, separation theorem
\small

\noindent{\bf{2020 Mathematics Subject Classification:}}  34B24, 34B09, 34C10}}

\end{minipage}
\end{center}


\section{Introduction}


We consider the Sturm-Liouville eigenvalue problem with real piecewise continuous coefficients over an interval $[a, b]$, 
\begin{gather}
-y^{\prime\prime}+q(x)y=\lambda r(x)y, \label{e1} \\
 y(a)=y(b)=0.\label{e1p}
\end{gather}
in the non-definite case, \cite{abm86}, i.e., this is essentially the case where the quadratic forms induced by the two operators on the left and on the right are sign-indefinite on their domains.  The more general equation  $$-(p(x) y^\prime)^\prime +q(x)y=\lambda r(x)y$$ where $p(x)$ is of one sign may be reduced to \eqref{e1} by means of a Liouville transformation, and so leads to no new results. Thus, we shall restrict ourselves to the formulation \eqref{e1} in this paper. 

{\bf Tacit assumptions and Notation}

In the sequel we will always assume that the eigenvalue problem \eqref{e1}-\eqref{e1p} is non-definite and has at least one pair of non-real eigenvalues, usually denoted by $\lambda, \bar{\lambda} \in \C$. The problem of the actual {\it existence} of non-real eigenvalues for problems of this type is difficult and very few results exist in this direction, see, e.g., \cite{alm}, \cite{fs}. Given a non-real eigenvalue of \eqref{e1}-\eqref{e1p}, an associated non-real eigenfunction will be denoted by $y=\varphi + i\psi$, where $\varphi, \psi$ are its, normally unspecified, real and imaginary parts. A non-real eigenfunction will be called a complex ghost state or a {\it complex ghost}, for short \cite{abm86}. Hereinafter, all solutions will be assumed non-trivial unless otherwise specified. In addition, since ${\rm Im}\ \lambda \neq 0$ we will always assume that, unless otherwise specified, ${\rm Im}\ \lambda > 0$.  This is always possible as we can replace $\lambda$ by $\bar{\lambda}$ and $y$ by $\bar{y}$ in \eqref{e1}. 

The following original result is essentially stated in Richardson \cite{Rich18}, see also \cite{abm82,abm86}.
\begin{theorem} {\rm [ \cite{Rich18}, Theorem X]} \label{lem01}
Let $r$ be continuous in $[a, b]$. If $r(x)$ changes its sign precisely once in $(a,b)$ then the roots of the real and imaginary parts of any complex ghost separate one another.
\end{theorem}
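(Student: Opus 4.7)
The plan is to represent the complex ghost $y=\varphi+i\psi$ in polar form $y=\rho e^{i\theta}$ with $\rho=|y|>0$ and $\theta$ a real, continuous, differentiable argument, and then to show that $\theta$ is strictly monotonic on $(a,b)$. Since zeros of $\varphi$ occur exactly at points where $\theta\equiv \pi/2 \pmod{\pi}$ and zeros of $\psi$ exactly where $\theta\equiv 0\pmod{\pi}$, strict monotonicity of $\theta$ forces these two zero sets to interlace, which is precisely the separation claimed.

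The preliminary step is to show that $y$ has no zero in the open interval $(a,b)$, so that the polar representation is smooth throughout. Write $\lambda=\mu+i\nu$ with $\nu>0$ and let $c\in(a,b)$ be the unique sign change of $r$; after possibly replacing $r$ by $-r$ I may assume $r>0$ on $(a,c)$ and $r<0$ on $(c,b)$. If $y(x_0)=0$ for some $x_0\in(a,b)$, then multiplying \eqref{e1} by $\bar y$, integrating by parts on $[a,x_0]$ using $y(a)=y(x_0)=0$, and subtracting the conjugate identity yields $(\lambda-\bar\lambda)\int_a^{x_0} r|y|^2\,dx=0$; similarly $\int_{x_0}^b r|y|^2\,dx=0$. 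Whichever of the two subintervals is contained in $(a,c]$ or in $[c,b)$ has $r$ of constant sign, and the corresponding integral identity then forces $y\equiv 0$ there, a contradiction.

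Next, substituting $y=\rho e^{i\theta}$ into \eqref{e1} and separating real and imaginary parts, the imaginary part integrates to
\[
(\rho^2\theta')'=-\nu r\rho^2.
\]
Equivalently, the Wronskian $W:=\varphi\psi'-\varphi'\psi=\rho^2\theta'$ satisfies $W'=-\nu r\rho^2$ with $W(a)=W(b)=0$ (since $\varphi$ and $\psi$ both vanish at the endpoints). On $(a,c)$ we have $W'<0$ and $W(a)=0$, so $W<0$ there; on $(c,b)$ we have $W'>0$ and $W(b)=0$, so $W$ increases from a negative value at $c$ up to $0$ at $b$ and stays strictly negative on $(c,b)$. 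Hence $\theta'=W/\rho^2<0$ throughout $(a,b)$, and $\theta$ is strictly decreasing.

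The interlacing is then immediate: as $\theta$ decreases continuously across $(a,b)$, it attains each value $k\pi/2$ at most once, and consecutive values of $k$ alternate between odd multiples (zeros of $\varphi$) and even multiples (zeros of $\psi$) of $\pi/2$. I expect the main obstacle to be the first step, the non-vanishing of $y$ on $(a,b)$: it is here that the hypothesis of a single sign change of $r$ is used essentially, since the argument requires some subinterval bounded by a putative interior zero of $y$ on which $r$ has constant sign. Once that is secured, the Prüfer monotonicity is a clean consequence of the identity $W'=-\nu r\rho^2$ together with the boundary conditions.
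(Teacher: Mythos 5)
Your Pr\"ufer computation is sound and, for interior zeros, it is essentially the paper's argument in different clothing: your $W=\varphi\psi'-\varphi'\psi$ equals $-({\rm Im}\,\lambda)\,G(x)$ with $G(x)=\int_a^x r|y|^2\,dt$, and your two-sided sign argument (integrating $W'=-\nu r\rho^2$ from $a$ on one side of the turning point and from $b$ on the other) is exactly the one-sign statement of Lemma~\ref{lem3a}. Recasting the conclusion as strict monotonicity of the phase $\theta$ is a clean way to obtain the interlacing of the \emph{interior} zeros, arguably tidier than the quotient arguments with $(\varphi/\psi)'$ used in the proof of Theorem~\ref{th3}.

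The gap is at the endpoints, and it is fatal to the statement as literally written --- which is the point of this paper: Theorem~\ref{lem01} as Richardson stated it is \emph{false} once endpoint zeros are counted. Since $y(a)=y(b)=0$, both $\varphi$ and $\psi$ vanish at $a$ and at $b$; these are common zeros, and your dictionary ``$\varphi=0$ iff $\theta\equiv\pi/2\ ({\rm mod}\ \pi)$'' breaks down precisely there because $\rho=0$: at an endpoint $\varphi$ and $\psi$ vanish for the trivial reason that $|y|=0$, not because $\theta$ hits a distinguished value (indeed $\theta(a)=\arg y'(a)$ is generically not a multiple of $\pi/2$). Consequently your argument says nothing about whether $\psi$ vanishes strictly between $a$ and the first interior zero of $\varphi$, and in fact it need not: Lemma~\ref{lem1b}(1) and Examples~\ref{exa1}, \ref{exa4} exhibit ghosts for which no such zero exists, and Lemma~\ref{lex} shows that no nontrivial ghost can have mutually separating zeros on an interval including an endpoint (separation there would force an infinite sequence of zeros accumulating at $a$, whence $y'(a)=0$ and $y\equiv 0$). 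What you have actually proved is the corrected, interior-zero version, i.e.\ Theorem~\ref{th3} in the special case $c=d$ of a single turning point; to be complete you would need to add the observation that the endpoint zeros must be excluded, together with a counterexample or the accumulation argument of Lemma~\ref{lex} showing that they genuinely must be.
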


A discussion of  Theorem~\ref{lem01} is necessary. First, it was understood at the time of Richardson's paper (as it is now in classical Sturmian cases, such as in Sturm's separation theorem) that the expression {\it separate one another} meant that between any two consecutive zeros of one solution there is always exactly one zero (of any other independent solution) that is distinct from either of the two original zeros. It is unlikely that Richardson had in mind the possibility that the weight function, $r(x)$, could be identically zero in a subinterval, however the latter possibility does change the result(s).

We will show below that Theorem~\ref{lem01} as stated is, in general, not true (see e.g., Example~\ref{exa1} and Figure~\ref{fig31} below). This was alluded to in \cite{abm82} but without discussion. It is true if one excludes the endpoints as one of the two zeros in question, i.e., we are dealing only with interior zeros. However, counterexamples can be found in cases where the endpoints are included (see the examples at the end of this article). Indeed, we show in Remark~\ref{remtwo} that if $r(x)$ is identically zero in a right-neighborhood of $x=a$ that includes the endpoint, then Theorem~\ref{lem01} is false. In fact, we go further and show in Lemma~\ref{lex} below, that the statement in Theorem~\ref{lem01} cannot be true if the endpoints are included (if even the weight function never vanishes identically in any subinterval). As a result, Richardson's Theorem X in \cite{Rich18} must be replaced by something like Theorem~\ref{th3}. Indeed, in Theorem~\ref{th3} we show that the single turning point or zero required in Theorem~\ref{lem01} may be replaced by an {\it interval of zeros} without affecting the conclusion.

In addition, we will find conditions under which given two consecutive zeros (one of which is an endpoint) of either the real (or imaginary) part of a complex ghost there is or there isn't a zero of the imaginary (or real) part. In this sense, this paper continues a study of the zeros of the real and imaginary parts of such ghosts, research that began with \cite{Rich18}, \cite{vs}, \cite{abm82}, \cite{maps} and continued with \cite{Mervis2016}, \cite{MK1} and the results therein.

\begin{remark} We note that Theorem~\ref{lem01} appears to be exceptional in the sense that it has no apparent equivalent in the two turning point case (with a weight function that doesn't vanish identically in a subinterval). For example, in \cite{Mervis2016} there are examples showing that the interior zeros of the real and imaginary parts of non-real eigenfunctions may not separate one another.
\end{remark}

\section{Main results}

First, we show that Theorem~\ref{lem01} cannot be true if one or more end-points are included in its conclusion.

\begin{lemma}\label{lex}
There is no non-trivial complex ghost whose real and imaginary parts have zeros that separate one another on an interval that includes the endpoints of $[a, b]$.
\end{lemma}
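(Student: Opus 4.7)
The strategy is to derive a contradiction directly from the definition of ``separation'' recalled after Theorem~\ref{lem01}, combined with the Dirichlet boundary conditions \eqref{e1p}. The key point is that once the endpoints are counted, $a$ and $b$ are forced to be \emph{common} zeros of $\varphi$ and $\psi$, and this alone is incompatible with strict interlacing; no analytic input beyond the boundary conditions and the isolation of zeros is needed.

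First I would observe that $y(a)=\varphi(a)+i\psi(a)=0$ yields $\varphi(a)=\psi(a)=0$, and likewise $\varphi(b)=\psi(b)=0$. Since $y$ is a nontrivial solution of \eqref{e1}, uniqueness for the initial value problem gives $y'(a)\neq 0$, so neither $\varphi$ nor $\psi$ is identically zero and each has a simple zero at $a$ or, at worst, an isolated zero there under the standing piecewise-continuity hypotheses on $q,r$. Enumerate the zeros of $\varphi$ in $[a,b]$ as $a=x_0<x_1<\cdots<x_n=b$ and those of $\psi$ as $a=y_0<y_1<\cdots<y_m=b$; both lists are nonempty and finite because the zero sets are discrete subsets of a compact interval.

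Now I would apply the separation property twice, in both directions. Applied to the consecutive pair $x_0=a,\;x_1$ of zeros of $\varphi$, the definition yields a zero of $\psi$ lying strictly inside $(a,x_1)$ and distinct from the two endpoints of that subinterval; since $y_1$ is the smallest zero of $\psi$ exceeding $a$, this forces $y_1<x_1$. Applied symmetrically to the consecutive pair $y_0=a,\;y_1$ of zeros of $\psi$, the same definition yields a zero of $\varphi$ strictly inside $(a,y_1)$, forcing $x_1<y_1$. The two inequalities $y_1<x_1$ and $x_1<y_1$ are plainly incompatible, which is the desired contradiction.

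The main obstacle is purely expository rather than technical: one must resist the temptation to invoke finer machinery such as the Wronskian identity $(\varphi\psi'-\varphi'\psi)'=-({\rm Im}\,\lambda)\,r(\varphi^2+\psi^2)$ or any hypothesis on the sign pattern of $r$. The argument is entirely combinatorial and localized in an arbitrarily small right-neighborhood of $a$; the content of the lemma is precisely that the \emph{shared} zero at the endpoint is by itself an obstruction to separation, regardless of what happens in $(a,b)$. The only small point to keep in mind is to verify that $x_1$ and $y_1$ genuinely exist, which is immediate from $\varphi(b)=\psi(b)=0$, and that they are strictly greater than $a$, which follows from the fact that $\varphi$ and $\psi$ cannot have non-isolated zero at $a$ without contradicting $y'(a)\neq 0$.
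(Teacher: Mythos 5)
Your core argument is sound and in fact tighter than the paper's: once one knows that $\varphi$ and $\psi$ each have a \emph{smallest} zero strictly to the right of $a$, calling them $x_1$ and $y_1$, the two applications of separation to the consecutive pairs $(a,x_1)$ and $(a,y_1)$ give $y_1<x_1$ and $x_1<y_1$, an immediate contradiction. The paper instead iterates this step indefinitely, producing a strictly decreasing interlaced sequence of zeros, and then uses the mean value theorem at the accumulation point to force $\varphi'=\psi'=0$ there, hence $y'=0$ and triviality. Your version short-circuits the infinite descent.

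The gap is in the step you treat as routine: the existence of $x_1$ and $y_1$. You assert that the zero sets of $\varphi$ and $\psi$ are ``discrete subsets of a compact interval,'' hence finite, but this is not automatic and is not a hypothesis of the lemma. The functions $\varphi$ and $\psi$ are not themselves solutions of a homogeneous second-order equation; they satisfy the coupled inhomogeneous system \eqref{phiq}--\eqref{psiq}, so a common zero of $\varphi$ and $\varphi'$ does not force $\varphi\equiv 0$ nearby, and isolation of zeros cannot be read off from uniqueness for initial value problems. Indeed the paper proves finiteness of interior zeros only later (Lemma~\ref{lem1c}), and only under the sign hypotheses of Theorem~\ref{th3}, which Lemma~\ref{lex} does not assume. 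Your fallback argument --- that a non-isolated zero at $a$ would contradict $y'(a)\neq 0$ --- only shows that $\varphi$ and $\psi$ cannot \emph{both} have zeros accumulating at $a$; if, say, only the zeros of $\varphi$ accumulate at $a$, then $\varphi'(a)=0$ while $\psi'(a)\neq 0$, and $y'(a)\neq 0$ survives, so your smallest zero $x_1$ need not exist. The repair is exactly the mechanism the paper uses: if the zeros of $\varphi$ accumulate at $a$ (or anywhere), the separation hypothesis itself interlaces zeros of $\psi$ between them, so both families accumulate at the same point, and the mean value theorem then kills $y$ and $y'$ simultaneously there. With that case disposed of, your two-inequality argument finishes the proof cleanly; as written, however, the existence of $x_1$ and $y_1$ is unjustified.
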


\begin{proof}
Without loss of generality we may assume that the endpoint in question is $x=a$. Assuming the contrary, let $y= \varphi + \psi i$ be a non-trivial complex ghost where $\varphi$ satisfies $\varphi(a)=0$ and $\varphi(x_0)=0$ where $x_0$ is a zero of $\varphi$ larger than $x=a$. Then, by hypothesis, $\psi(x)$ has a unique zero $t_0$ in $(a, x_0)$. Now $\psi(a)=\psi(t_0)=0$, thus, by assumption $\varphi(x_1)=0$ for some unique $x_1 \in (a, t_0)$. Since $\varphi(a)=\varphi(x_1)=0$ there must be a unique zero $t_1 \in (a, x_1)$ of $\psi$. Continuing in this way we see that the now infinite sequence of zeros $x_i, t_i$ must accumulate at $x=a$. On the other hand the theorem of the mean then implies that both $\varphi^\prime(a)=0$ and $\psi^\prime(a)=0$. Thus, $y^\prime(a)=0$ and so $y$ is the trivial solution, a contradiction.
\end{proof}

\begin{lemma}\label{lem0}
Let $\lambda$ be a non-real eigenvalue of \eqref{e1}-\eqref{e1p} and $y = \varphi +i \psi$ be a corresponding eigenfunction. Then $\varphi, \psi$ are linearly independent over $[a, b]$.
\end{lemma}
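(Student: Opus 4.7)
The plan is to argue by contradiction. Suppose $\varphi$ and $\psi$ are linearly dependent on $[a,b]$. Then either one of them is identically zero, or there is a real constant $c$ with $\psi = c\varphi$. In every subcase, $y$ can be written as $y = \alpha u$ with $\alpha \in \C \setminus \{0\}$ and $u$ a nontrivial real-valued function on $[a,b]$ (take $u = \varphi$ if $\psi = c\varphi$ or $\psi \equiv 0$, and $u = \psi$ if $\varphi \equiv 0$). The Dirichlet boundary conditions transfer to give $u(a) = u(b) = 0$.

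Substituting $y = \alpha u$ into \eqref{e1} and dividing through by $\alpha$ yields
\[
-u'' + q(x)\, u = \lambda\, r(x)\, u
\]
on $[a,b]$. The left-hand side is real-valued because $u$ and $q$ are, so taking imaginary parts gives $({\rm Im}\ \lambda)\, r(x)\, u(x) \equiv 0$. Since ${\rm Im}\ \lambda \neq 0$ by the standing assumption, this forces $r(x)\, u(x) \equiv 0$ on $[a,b]$.

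To close the argument I would invoke the non-definite hypothesis: because $r$ changes sign, piecewise continuity supplies an open subinterval $I \subset [a,b]$ on which $r$ never vanishes, so $u \equiv 0$ on $I$. Since $ru \equiv 0$ everywhere, $u$ also satisfies the homogeneous linear equation $-u'' + q u = 0$ on all of $[a,b]$, and the uniqueness theorem for second-order linear initial value problems (applied across the finitely many jump points of $q$, where solutions remain $C^1$) then propagates $u \equiv 0$ from $I$ to all of $[a,b]$. This contradicts the nontriviality of $u$, giving the required contradiction.

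The essential content is thus the two steps of reducing to a scalar multiple of a real function and extracting imaginary parts; the only point needing a moment's care is the final ODE-uniqueness step in the presence of piecewise continuous coefficients, but this is standard and not a genuine obstacle.
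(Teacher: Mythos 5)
Your proposal is correct and follows essentially the same route as the paper: assume dependence, reduce $y$ to a nonzero complex multiple of a real function, take imaginary parts to get $r u \equiv 0$, use non-definiteness to find a subinterval where $r\neq 0$ hence $u\equiv 0$, and finish by ODE uniqueness. The only cosmetic difference is that the paper applies uniqueness to the original equation \eqref{e1} starting from a point of that subinterval, while you propagate through the reduced equation $-u''+qu=0$; both close the argument identically.
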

\begin{proof}
Assume the contrary. Then $\psi =\mu \varphi$ where $\mu \in \CC \textbackslash {0}$.  Since $y=(1+ i\mu)\varphi$ satisfies \eqref{e1} we get that $- \varphi^{\prime\prime} + q\varphi = \lambda r \varphi$. Since the left side of the previous equation is real and ${\rm Im}\, \lambda \neq 0$ we find that $r(x)\varphi(x) \equiv 0$ on $[a, b]$. Since, by hypothesis, \eqref{e1}-\eqref{e1p} is non-definite, $r(x)$ cannot be identically zero and so there is an interval $J\subset [a, b]$ on which $r(x) > 0$, say. Thus, $\varphi(x) \equiv 0$ on $J$. Hence $\psi(x) \equiv 0$ on $J$ so that $y \equiv 0$ on $J$. This, however, violates the uniqueness of solutions of an initial value problem with initial conditions on a point of $J$. This contradiction proves the lemma.
\end{proof}

\begin{lemma}\label{lem3a} Let $c>a$, $r$ be piecewise continuous  on $(a, c)$ and $r(x)< 0$ there.  In addition, let $r(x) \equiv 0$ on $[c, d] \subset (a, b)$ and $r(x) > 0$ and continuous on $(d, b)$.  Then, for any complex ghost $y$ of \eqref{e1}, 
\begin{equation}\label{gfn}
G(x) \equiv \int_{a}^{x} r |y|^2\ dt,\quad\quad x\in [a, b],
\end{equation}
is of one sign in $(a, b)$, in fact, $G(x)<0$, for all $x\in (a, b)$.
\end{lemma}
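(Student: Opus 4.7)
The plan is to use Green's identity to locate the values of $G$ at the endpoints, use the sign information on $r$ to monitor where $G$ can grow or decay, and finally use uniqueness of solutions to the initial value problem to rule out interior zeros of $G$.

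First I would multiply equation \eqref{e1} by $\bar y$, integrate over $[a,b]$, and integrate by parts once. Using $y(a)=y(b)=0$, this yields
\begin{equation*}
\int_a^b |y'|^2\,dx + \int_a^b q|y|^2\,dx = \lambda \int_a^b r|y|^2\,dx.
\end{equation*}
The left-hand side is real, while by hypothesis $\mathrm{Im}\,\lambda \neq 0$. Hence $\int_a^b r|y|^2\,dx = 0$, so $G(b)=0$; trivially $G(a)=0$ as well. Next, by the fundamental theorem of calculus (applicable at points of continuity of $r$, with one-sided versions at the finitely many jumps), $G'(x) = r(x)|y(x)|^2$. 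Thus $G$ is nondecreasing on $[a,c]$, identically equal to $G(c)$ on $[c,d]$, and nonincreasing on $[d,b]$.

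Next I would show strict positivity. First, $G(c)>0$: if instead $G(c)=0$, then $\int_a^c r|y|^2\,dt=0$ with a nonnegative integrand, forcing $y\equiv 0$ on $(a,c)$. At any point $x_0\in(a,c)$ this gives $y(x_0)=y'(x_0)=0$, which by uniqueness of solutions to the initial value problem for \eqref{e1} forces $y\equiv 0$ on $[a,b]$, contradicting that $y$ is a non-trivial complex ghost. The same argument applied to $\int_a^{x_0} r|y|^2\,dt$ for any $x_0\in(a,c)$ shows that $G(x)>0$ for all $x\in(a,c]$. On $[c,d]$, $G\equiv G(c)>0$. Finally, on $(d,b)$, $G$ is nonincreasing with $G(d)=G(c)>0$ and $G(b)=0$; if $G(x_1)=0$ for some $x_1\in(d,b)$, then $G$ would already vanish throughout $[x_1,b]$, hence $r|y|^2\equiv 0$ there, and since $r<0$ on $(d,b)$ this gives $y\equiv 0$ on $(x_1,b)$, again contradicting uniqueness. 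Therefore $G(x)>0$ for all $x\in(a,b)$.

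The main obstacle is the positivity step rather than the monotonicity step: the monotonicity is immediate from the sign of $r$, but to upgrade non-negativity to strict positivity one has to avoid the possibility that $G$ vanishes on some sub-interval of $(a,c)$ or $(d,b)$, and this is exactly where the IVP-uniqueness argument (together with Lemma~\ref{lem0} implicitly, since $y$ is a non-trivial complex ghost) is essential. A minor technical point is that $r$ is only piecewise continuous, so one should interpret $G'(x)=r(x)|y(x)|^2$ in the a.e. sense and use the absolute continuity of $G$; this does not affect the monotonicity conclusions, since integrals of nonnegative (respectively nonpositive) functions are monotone regardless of finitely many jumps.
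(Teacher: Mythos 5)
Your proof is correct, and it reaches the conclusion by a genuinely different mechanism in the key step. Both you and the paper begin the same way: $G>0$ on $(a,d]$ because $r>0$ there and a nontrivial solution cannot vanish identically on a subinterval, so any zero of $G$ would have to lie in $(d,b)$, and both arguments rely (yours explicitly, the paper's implicitly) on $G(b)=0$ from the Green's identity. Where you diverge is in ruling out a zero $x_1\in(d,b)$: the paper applies Rolle's theorem on $[x_1,b]$ to produce a point $\eta$ with $y(\eta)=0$, and then invokes the theorem that a Dirichlet problem admitting a non-real eigenvalue must have a sign-indefinite weight --- impossible on $[\eta,b]$ since $r<0$ there. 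You instead observe that $G$ is nonincreasing on $[d,b]$ with $G(x_1)=G(b)=0$, so $G\equiv 0$ on $[x_1,b]$, forcing $r|y|^2\equiv 0$ and hence $y\equiv 0$ on $(x_1,b)$, which contradicts uniqueness for the initial value problem. Your route is more elementary and self-contained, needing only monotonicity and IVP uniqueness rather than the external non-definiteness criterion (the paper cites this as Theorem~1 of its reference [abm82] elsewhere); the paper's route has the advantage of reusing the same non-definiteness mechanism that drives several of its later proofs. Your explicit treatment of the piecewise-continuity of $r$ (interpreting $G'=r|y|^2$ a.e.\ and using absolute continuity of $G$) is a welcome precision that the paper glosses over.
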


\begin{proof} Since $r(x)<0$ on $(a, c)\subseteq (a, d)$, by hypothesis, then $G(x) < 0$ too, as an eigenfunction cannot vanish identically on an interval unless it is identically zero there. Assume, on the contrary, that there is a smallest point $x_1\in (a, b)$ with $G(x_1) =0$. Since $G(x)$ must change its sign around $x_1$ it follows that $x_1 \in [d, b)$. 

Next, observe that $G(x_1)=G(b)=0$ and $G \in C^1(d,b)$.  Hence there exists $\eta \in (d, b)$ such that $r(\eta)|y(\eta)|^2 =0$, i.e., $y(\eta)=0$. Consider the boundary problem \eqref{e1} subject to the boundary conditions $y(\eta)=y(b)=0$. This new eigenvalue problem admits our complex ghost $y$ as an eigenfunction corresponding to the same non-real eigenvalue. Thus, this problem must be non-definite on $[\eta, b]$, i.e., $r(x)$ must change its sign on this interval, but this is a contradiction as $r(x)>0$ there. Hence no such $x_1$ can exist. This completes the proof.
\end{proof}

The next result is an immediate consequence of Lemma~\ref{lem3a}.

\begin{corollary}\label{coro1}
If $r(x) > 0$ on $(a, c)$, $r(x) \equiv 0$ on $[c, d] \subset (a, b)$, and $r(x) < 0$ on $(d, b)$, then $G(x)>0$ on $(a, b)$.
\end{corollary}

\begin{proof} Replace $\lambda$ by $-\lambda$ and $r$ by $-r$ in Lemma~\ref{lem3a} so that the eigenfunction doesn't change.
\end{proof}

\begin{corollary} (Theorem X, \cite{Rich18})  If $r(x)$ is not identically zero in any right interval of $a$ and $r(x)$ has a unique zero around which $r(x)$ changes sign in $(a, b)$, then $G(x)$ is of one sign in $(a, b)$.
\end{corollary}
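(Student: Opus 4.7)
My plan is to obtain this corollary as the specialization $c = d$ of Lemma~\ref{lem3a}. The first step is to unpack the hypotheses: let $c \in (a, b)$ denote the unique turning point. Since $r$ is not identically zero in any right-neighborhood of $a$ and changes sign exactly once, $r$ must be of one strict sign on $(a, c)$ and of the opposite strict sign on $(c, b)$. By replacing $(r, \lambda)$ with $(-r, -\lambda)$ if necessary---an operation that preserves \eqref{e1}, \eqref{e1p}, and the eigenfunction $y$---I can arrange $r > 0$ on $(a, c)$ and $r < 0$ on $(c, b)$.

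The second step is to invoke Lemma~\ref{lem3a} with $d := c$. The interval $[c, d]$ then reduces to the single point $\{c\}$, on which $r \equiv 0$ holds vacuously, while $r > 0$ on $(a, c)$ and $r < 0$ on $(d, b) = (c, b)$ supply the remaining hypotheses. I would then walk through the argument in the proof of Lemma~\ref{lem3a} to confirm that each step survives the degeneracy: the initial positivity $G(x) > 0$ on $(a, d]$ still holds because $r > 0$ on $(a, c)$ and $y$ cannot vanish identically; a purported smallest zero $x_1 \in (a, b)$ of $G$ must still lie in $[d, b) = [c, b)$; the derived interior zero $\eta \in (c, b)$ of $y$ still produces a non-definite subproblem on $[\eta, b]$; and the latter still contradicts $r < 0$ on $(c, b)$.

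The main (and only) obstacle is verifying that the collapse $c = d$ does not weaken any inequality in the proof of Lemma~\ref{lem3a}. This should be routine: all the sign hypotheses there are imposed on open subintervals, and the putative zero $x_1$ is located in a half-open interval containing $d$, so nothing in the argument relied on $[c,d]$ having positive length. Consequently no independent argument is needed beyond this bookkeeping, and the conclusion that $G$ is of one sign on $(a, b)$ follows at once.
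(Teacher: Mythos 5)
Your proposal is correct and is essentially the paper's own proof: the paper also obtains this corollary simply as the degenerate case $c=d$ of Lemma~\ref{lem3a} (with the opposite-sign configuration handled by the substitution $(r,\lambda)\mapsto(-r,-\lambda)$, exactly as you note). Your extra bookkeeping confirming that no step of the lemma's proof relies on $[c,d]$ having positive length is a reasonable, if not strictly necessary, addition.
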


\begin{proof} This is the case where $c=d$ in Lemma~\ref{lem3a} so either Lemma~\ref{lem3a} or Corollary~\ref{coro1} applies.
\end{proof}

\begin{corollary}\label{ht}
Let $c>a$, $r$ be piecewise continuous on $(a, c)$ and $r(x)<0$ there.  In addition, let $r(x) \equiv 0$ on $[c, d] \subset (a, b)$ and $r(x) > 0$ and continuous on $(d, b)$.  Then, for any $x\in (a, b)$, the function $H(x) = \int_x^b r|y|^2\, dt$ is of one sign on $(a, b)$, i.e., $H(x) > 0$ on $(a, b)$.
\end{corollary}

\begin{proof} This is an application of Lemma~\ref{lem3a} and the fact that $\int_a^b r|y|^2\, dx = 0$ must hold for any complex ghost.
\end{proof}
  
 \begin{lemma}\label{lemi}
 Let $y = \varphi+ i \psi$ be a complex ghost. Then, for any two points $x_1, x_2 \in [a, b]$ we have,
 \begin{equation}
\label{eid}
\left [\varphi^\prime\psi - \varphi\psi^\prime\right] \bigg |_{x_1}^{x_2} = {\rm Im}\, \lambda \int_{x_1}^{x_2} r|y|^2\, \ dx.
\end{equation}
 \end{lemma}
 
 \begin{proof}
 Let $y = \varphi+ i \psi$ be a complex ghost. Then, from \eqref{e1}, we find that (suppressing the variables for simplicity of exposition),
\begin{gather}
-\varphi^{\prime\prime} +(q - r\, {\rm Re}\ \lambda) \varphi = - ({\rm Im}\, \lambda)\ r\ \psi,\label{phiq} \\
-\psi^{\prime\prime} +(q - r\,{\rm Re}\ \lambda)\psi =  ({\rm Im}\, \lambda)\ r\ \varphi,\label{psiq}
\end{gather}
where ${\rm Im}\, \lambda \neq 0$ by hypothesis. For any two points $x_1, x_2 \in [a, b]$ a simple Sturmian argument applied to \eqref{phiq} and \eqref{psiq} now yields,
\begin{equation}
\label{eid}
\left [\varphi^\prime\psi - \varphi\psi^\prime\right] \bigg |_{x_1}^{x_2} = {\rm Im}\, \lambda \int_{x_1}^{x_2} r|y|^2\, \ dx.
\end{equation}
From \eqref{eid} since $y(a)=0$ is equivalent to $\varphi(a)=\psi(a)=0$ we get that, for any $x\in [a, b]$,
\begin{equation}
\label{eid2}
\varphi^\prime(x)\psi(x) - \varphi(x)\psi^\prime(x) = {\rm Im}\, \lambda \int_{a}^{x} r |y|^2\, \ dt = ({\rm Im}\, \lambda)\ G(x).
\end{equation}
By Lemma~\ref{lem3a} and Corollary~\ref{coro1} $G(x)$ is of one sign only on $(a, b)$ (depending on the sign of $r(x)$ on $(a, c)$).
 \end{proof}
 
 The next lemma is an application of classical folklore so a proof is given for the sake of completeness.
 
 \begin{lemma}\label{lemii}
 Let $q, r \in C[a,b]$, and for some $c>a$,  let $r(x)<0$ on $(a, c)$, $r(x) \equiv 0$ on $[c, d] \subset (a, b)$ and $r(x) > 0$ and continuous on $(d, b)$.  Let $\varphi, \psi$ be two non-trivial solutions of the system \eqref{phiq} and \eqref{psiq}. If the zeros of $\varphi$ (or $\psi$) have a point of accumulation $x_0 \in (a, b)$, then the non-real eigenfunction, $y$, satisfies  $y(x_0)=0$. In addition, the end-points, $x=a, b$, cannot be points of accumulation of either $\varphi$ or $\psi$.
 \end{lemma}
 
 \begin{proof} By fundamental existence theorems we know that the continuity of the coefficients $q, r$ implies that both solutions $\varphi, \psi \in C^2(a,b)$ (whenever they exist, or whenever there are non-real eigenvalues).
 
If possible let $x_0\in (a, b)$ be a point of accumulation of $\varphi$ and let $x_n\to x_0$ be an infinite sequence of zeros of $\varphi$ accumulating at $x_0$. Then, by continuity, $\varphi(x_0)=0$. On the other hand, $\varphi\in C^1$ so that $0 = (\varphi(x_n)-\varphi(x_0))/(x_n-x_0) = \varphi^\prime(\xi_n)$ holds for all $n$ for some $\xi_n$ between $x_0$ and $x_n$. There follows that $\xi_n\to x_0$ as well and thus, since $\varphi\in C^1$, $\varphi^\prime(x_0)=0$. Since $\varphi\in C^2$ we get that $0 = (\varphi^\prime(\xi_n)-\varphi^\prime(x_0))/(\xi_n-x_0) = \varphi^{\prime\prime}(\zeta_n)$ for some $\zeta_n$ between $\xi_n$ and $x_0$. Since $\zeta_n \to x_0$ there follows that $\varphi^{\prime\prime}(x_0)=0.$ From this and \eqref{phiq} we obtain, $r(x_0)\psi(x_0)=0$ (as ${\rm Im}\, \lambda > 0$, by hypothesis.)
 
So, $\psi(x_0)=0$ whenever $r(x_0)\neq 0$. On the other hand, if $r(x)\equiv 0$ on some subinterval $[c,d]\subset (a, b)$ where, in addition, $x_0 \in [c, d]$, then, using \eqref{phiq} and \eqref{psiq}, we get that both $\varphi, \psi$ are solutions of the same equation, namely, $-z^{\prime\prime} +(q - r\,{\rm Re}\ \lambda)z =  0$, where $x\in [c, d]$. Thus either $\varphi, \psi$ are linearly dependent, in which case $\varphi(x_0)=0 =\psi(x_0)$  is immediate, or they are linearly independent, in which case Sturm's Separation Theorem implies the interlacing of the zeros of $\varphi, \psi$ and thus, once again we must have $\psi(x_0)=0.$ Therefore, in any case we must have $\psi(x_0)=0$. Finally, $\varphi(x_0)=\psi(x_0)=0$ is equivalent to $y(x_0)=0$ as was to be shown.

The same argument applies to $\psi$ so that all its interior zeros are isolated as well.

If either $x_0=a$ or $x_0=b$ is an accumulation point of $\varphi$, we proceed as follows: Assume that $x=a$ is such a point. Since $y^\prime(a)\neq 0$ (as $y$ is non-trivial) at least one, maybe both, of $\varphi^\prime(a), \psi^\prime(a)$ are non-zero. We may assume that $\varphi^\prime(a)\neq 0$ (or else if  $\varphi^\prime(a)=0$, then $\psi^\prime(a)\neq 0$ and we can apply the following argument to $\psi$ as $\psi(a)=0$). Applying the argument in the second paragraph of the proof of Lemma~\ref{lemii} above, we can conclude that $\varphi^\prime(a)=0$, which is a contradiction.  Hence $x=a$ cannot be a point of accumulation of $\varphi$ (similarly for $\psi$). If $x=b$ a similar argument applies whose proof is omitted.
 \end{proof}
 
 \begin{corollary}
Let $q, r$ be continuous on $[a, b]$. For some $c>a$, let $r(x) < 0$ on $[a, c)$, $r(x) \equiv 0$ on $[c, d] \subset (a, b)$, and $r(x) > 0$ on $(d, b]$.  Then the interior zeros of any solution $\varphi$ of \eqref{phiq}-\eqref{psiq} are isolated.
 \end{corollary}
 
 \begin{proof}
Since we are dealing with interior zeros, by Lemma~\ref{lemii} if there is such an accumulation point $x_0$ of $\varphi$ that satisfies $a< x_0 < b$, then it must also satisfy $y(x_0)=0$ where $y=\varphi + i \psi$ is an eigenfunction of \eqref{e1}-\eqref{e1p}. Thus, $y(a)=y(x_0)=y(b)=0$. Note that if $x_0 \in (a, d]$  then $r(x)\leq 0$ in which case the boundary problem \eqref{e1} subject to the boundary condition $y(a)=y(x_0)=0$ must have a non-real eigenvalue which isn't possible as the eigenvalue problem on $(a, x_0]$ is now right-definite. On the other hand, if $x_0\in [d, b)$ then $r(x)> 0$ in $[d, b)$ and so in $[x_0, b)$. Hence, the problem \eqref{e1} subject to the boundary condition $y(x_0)=y(b)=0$ must have also have a non-real eigenvalue which isn't possible as this problem is also right-definite. Thus all interior zeros of $\varphi$ are isolated. \end{proof}

\begin{theorem}\label{th3} Let $q, r \in C[a,b]$, and for some $c>a$,  let $r(x)<0$ on $(a, c)$, $r(x) \equiv 0$ on $[c, d] \subset (a, b)$ and $r(x) > 0$ and continuous on $(d, b)$.  Then the interior zeros of the real and imaginary part of a complex ghost separate one another.
\end{theorem}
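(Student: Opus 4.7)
\textbf{Proof plan for Theorem~\ref{th3}.}

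The plan is to run a Sturm-type separation argument powered by the sign information on $G(x)$ provided by Lemma~\ref{lem3a}. Write $\lambda=\alpha+i\beta$ with $\beta>0$, and $y=\varphi+i\psi$ with $y(a)=y(b)=0$, so that $\varphi(a)=\psi(a)=\varphi(b)=\psi(b)=0$. Splitting \eqref{e1} into real and imaginary parts gives the coupled system
\begin{align*}
-\varphi'' + q\varphi &= \alpha r\varphi - \beta r\psi,\\
-\psi'' + q\psi &= \alpha r\psi + \beta r\varphi.
\end{align*}
Multiplying the first equation by $\psi$, the second by $\varphi$, subtracting, and integrating from $a$ to $x$ yields the Wronskian-type identity
\begin{equation*}
W(x) := \varphi(x)\psi'(x) - \varphi'(x)\psi(x) = -\beta\, G(x),\qquad x\in[a,b],
\end{equation*}
where $G$ is defined in \eqref{gfn}. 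The boundary condition at $a$ kills the constant of integration.

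Next, I would apply Lemma~\ref{lem3a} to conclude that $G$, and hence $W$, is of one (strict) sign throughout the open interval $(a,b)$. This is the decisive ingredient, and it is where the hypothesis on the sign structure of $r$ (one positive piece, a flat zero piece, one negative piece) gets used; the rest of the argument is then generic Sturmian reasoning. In particular, since $W$ does not vanish on $(a,b)$, the functions $\varphi$ and $\psi$ cannot share an interior zero, and at any interior zero $x_0$ of $\varphi$ we have $W(x_0)=-\varphi'(x_0)\psi(x_0)\neq 0$, so both $\varphi'(x_0)\neq 0$ and $\psi(x_0)\neq 0$ (and symmetrically at zeros of $\psi$).

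Now let $x_1<x_2$ be two consecutive interior zeros of $\varphi$. Because $\varphi'(x_1)$ and $\varphi'(x_2)$ have opposite signs (simple zeros of a real function that keeps one sign in between) while $W(x_1)$ and $W(x_2)$ have the same sign (Lemma~\ref{lem3a}), the identity $W(x_i)=-\varphi'(x_i)\psi(x_i)$ forces $\psi(x_1)$ and $\psi(x_2)$ to have opposite signs. By the intermediate value theorem, $\psi$ has at least one zero in $(x_1,x_2)$. Swapping the roles of $\varphi$ and $\psi$ gives the companion statement: between any two consecutive interior zeros of $\psi$ lies a zero of $\varphi$. Combining the two excludes the possibility of two zeros of $\psi$ between $x_1$ and $x_2$, so $\psi$ has exactly one zero there, and the interior zeros interlace as claimed.

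The main obstacle is really already handled upstream; once one has the one-sign conclusion for $G(x)$ on $(a,b)$ from Lemma~\ref{lem3a}, the Sturmian interlacing falls out. The only subtlety worth flagging is that $\varphi$ and $\psi$ do not individually satisfy a single second-order linear scalar ODE, so the usual ``shared zero implies trivial solution'' step must be replaced by the observation that $W=-\beta G$ is non-vanishing on $(a,b)$, which in turn rests on the non-definiteness argument in the proof of Lemma~\ref{lem3a} (if $y$ vanished at an interior point, the problem would have to be non-definite on the remaining subinterval, contradicting the sign condition on $r$ there). Note that the argument is restricted to interior zeros; by Lemma~\ref{lex} it cannot be extended to include $x=a$ or $x=b$.
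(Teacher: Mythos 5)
Your proposal is correct and rests on the same two pillars as the paper's proof: the Wronskian identity $\varphi'\psi-\varphi\psi'=({\rm Im}\,\lambda)\,G(x)$ obtained from the real/imaginary split of \eqref{e1} together with $y(a)=0$, and the one-sign property of $G$ on $(a,b)$ supplied by Lemma~\ref{lem3a}. Where you differ is in the endgame. The paper first establishes that $\psi$ must vanish somewhere in $[x_1,x_2]$ via the monotonicity of the quotient $\varphi/\psi$, then rules out $\psi(x_1)=0$ and $\psi(x_2)=0$ by a separate non-definiteness argument (if $y$ vanished at $x_1$, the problem restricted to $[a,x_1]$ and to $[x_1,b]$ would both have to be non-definite, which the sign pattern of $r$ forbids), and finally gets uniqueness from the monotonicity of $\psi/\varphi$. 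You instead read everything off the non-vanishing of $W$ at $x_1$ and $x_2$: simplicity of the zeros of $\varphi$, the fact that $\psi(x_i)\neq 0$, and the forced sign change of $\psi$ across $(x_1,x_2)$, with uniqueness coming from the symmetric statement with $\varphi$ and $\psi$ interchanged. This is cleaner and bypasses the auxiliary non-definiteness argument entirely. The one point you gloss over is that the zeros of $\psi$ in $(x_1,x_2)$ cannot accumulate, so that ``two consecutive zeros of $\psi$'' is meaningful in the uniqueness step; this is immediate from $W\neq 0$ on $(a,b)$, since an accumulation point $t_0$ would give $\psi(t_0)=\psi'(t_0)=0$ and hence $W(t_0)=0$. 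The paper's proof relies on the same implicit fact.
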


\begin{proof} Since, by lemma~\ref{lemii}, the interior zeros of $\varphi$ are isolated, we let $x_1 < x_2\in (a, b)$ be two consecutive zeros of $\varphi(x)$ where, without loss of generality, we may take it that $\varphi(x) > 0$ in $(x_1, x_2)$. (If $\varphi$ is negative, we replace $\varphi$ by $-\varphi$ and $\psi$ by $-\psi$ in what follows.) Assume, if possible, that $\psi(x) \neq 0$ in $[x_1, x_2]$. 

Since $r(x)< 0$ in $(a, c)$, Lemma~\ref{lem3a} implies that $G(x)< 0$ in $(a, b)$. Assume, if possible, that $\psi(x) > 0$ in $[x_1, x_2]$. Using \eqref{eid2} we get, for $x_1 \leq x \leq x_2$,
\begin{equation}\label{ww}
\left (\frac{\varphi}{\psi}\right)^\prime = \frac{{\rm Im}\, \lambda}{\psi^2(x)}\ G(x) < 0.
\end{equation}
Thus, $0= \varphi(x_1)/\psi(x_1)  > \varphi(x)/\psi(x)$ which is impossible. Hence, $\psi(x) > 0$ in $[x_1, x_2]$ cannot hold, i.e., $\psi(x)$ must vanish somewhere in $[x_1, x_2]$. A similar argument holds in the case where $\psi(x) <0$ in $[x_1, x_2]$ as from \eqref{ww}, $ \varphi(x)/\psi(x)  > \varphi(x_2)/\psi(x_2) = 0$, which is also impossible.

Now we show that $\psi(x_1)\psi(x_2)\neq 0$. Assuming, if possible, that $\psi(x_1)=0$, then $y(x_1) = 0$ as well. Since $y(a)=y(x_1) = 0$ and $y$ is a non-real eigenfunction, \eqref{e1} must be non-definite on $[a, x_1]$, so $r(x)$ cannot have a fixed sign there. Therefore, we must have that $x_1\in (d, b]$ (see the hypotheses on $r(x)$). Once again the problem \eqref{e1}, subject to the boundary conditions $y(x_1)=0, y(b)=0$, must be non-definite. But this is impossible since $r(x) < 0$ on $(d, b)$.  This contradiction implies that  $\psi(x_1) \neq 0$. A similar argument applies to the case where $\psi(x_2)=0$, and we leave this to the reader.

It follows from the preceding argument that $\psi(x) =0$ somewhere in $(x_1, x_2)$, say, at $x=t_1$. We take it that $t_1$ is the smallest such zero, so that $\psi(x) \neq 0$ in $(t_1, t_2)$. However, since $\varphi(x) \neq 0$ in $[t_1, t_2]$, a modification of \eqref{ww} using \eqref{eid2}, gives
\begin{equation}\label{www}
\left (\frac{\psi}{\varphi}\right)^\prime =  - \frac{{\rm Im}\, \lambda}{\varphi^2(x)}\ G(x) > 0,
\end{equation}
from which we obtain a contradiction as before, as $\psi/\varphi$ must be increasing in $(t_1, t_2)$, yet $\psi$ vanishes at the end-points. This contradiction shows that between any two consecutive interior zeros of $\varphi$, the function $\psi$ has a unique zero.
\end{proof}

\begin{remark}\label{remtwo}
This extends and corrects Theorem~\ref{lem01} from a single turning point (or a zero around which $r(x)$ changes sign) to an interval of zeros. The condition that, for some $c>a$,  $r(x)\neq 0$ on $(a, c)$ is necessary. This is because if $r$ vanishes identically in some subinterval of $[a, c]$ containing $x=a$,  the separation property of the roots fails as $\varphi, \psi$ must be linearly dependent on account of \eqref{eid} applied to said interval.
\end{remark}

\begin{lemma}\label{lem1b}
Let $G(x)<0$ for all $x\in (a, b).$ Let $\varphi^{\prime}(a)>0$, (resp. $\varphi^{\prime}(a)< 0$) and let $x_0 \in (a, b)$ be the smallest zero of $\varphi(x)$.
\begin{enumerate}
\item If $\psi^{\prime}(a)>0$, then $\psi(x)>0$ in $(a, x_0]$  (resp. if $\psi^{\prime}(a)<0$, then $\psi(x) < 0$ in $(a, x_0]$)
\item If $\psi^{\prime}(a)<0$, then $\psi(x)=0$  exactly once in $(a, x_0)$ (resp. if $\psi^{\prime}(a)>0$, then $\psi(x)=0$  exactly once in $(a, x_0)$)
\item If $\psi^{\prime}(a)=0$, then one of the first two cases must occur.
\end{enumerate}
\end{lemma}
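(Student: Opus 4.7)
My plan is to leverage identity \eqref{eid2}, which under the hypothesis $G(x) < 0$ on $(a,b)$ gives
$$\varphi'(x)\psi(x) - \varphi(x)\psi'(x) = ({\rm Im}\,\lambda)\,G(x) < 0, \quad x \in (a,b).$$
Since $\varphi'(a) > 0$ and $x_0$ is the smallest zero of $\varphi$, we have $\varphi > 0$ on $(a, x_0)$, and hence
$$\left(\frac{\psi}{\varphi}\right)'(x) = \frac{-({\rm Im}\,\lambda)\,G(x)}{\varphi(x)^2} > 0,$$
so $\psi/\varphi$ is \emph{strictly increasing} on $(a, x_0)$, with $\lim_{x \to a^+} \psi(x)/\varphi(x) = \psi'(a)/\varphi'(a)$ by L'Hopital's rule.

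The crux is to show $\psi(x_0) \neq 0$. Suppose the contrary; then $y(x_0)=0$, so $y$ is a non-real eigenfunction of both the BVP \eqref{e1} with boundary conditions $y(a)=y(x_0)=0$ and the BVP with $y(x_0)=y(b)=0$. Both sub-problems must then be non-definite, forcing $r$ to change sign on both $[a, x_0]$ and $[x_0, b]$. Given the structure of $r$ (positive on $(a,c)$, zero on $[c,d]$, negative on $(d,b)$), this demands $x_0 > d$ and $x_0 < c$ simultaneously, which is impossible since $c \leq d$. Thus $\psi(x_0) \neq 0$, and the strict monotonicity of $\psi/\varphi$ together with $\varphi(x) \to 0^+$ as $x \to x_0^-$ forces the left limit of $\psi/\varphi$ at $x_0$ to be $+\infty$, so in fact $\psi(x_0) > 0$.

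With these two facts the three cases follow immediately. In case (1), $\psi'(a) > 0$ means $\psi/\varphi$ starts at the positive value $\psi'(a)/\varphi'(a)$ and is increasing, so $\psi/\varphi > 0$ throughout $(a, x_0)$, giving $\psi > 0$ on $(a, x_0]$. In case (2), $\psi'(a) < 0$ means $\psi/\varphi$ starts negative and increases to $+\infty$; by the intermediate value theorem together with strict monotonicity, $\psi/\varphi$ vanishes at exactly one point of $(a, x_0)$, delivering the unique zero of $\psi$ there. In case (3), $\psi'(a) = 0$ makes $\psi/\varphi \to 0$ at $a^+$, so monotonicity still gives $\psi/\varphi > 0$ on $(a, x_0)$, placing us in the conclusion of case (1). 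The parenthetical ``resp.'' statements follow immediately by replacing $y$ with $-y$ (another eigenfunction for the same eigenvalue), which negates both $\varphi'(a)$ and $\psi'(a)$. The main obstacle in this argument is the non-definiteness dichotomy used to exclude $\psi(x_0) = 0$; once that is in hand, the rest is a clean application of the monotonicity of $\psi/\varphi$.
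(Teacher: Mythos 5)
Your proof is correct, and it reaches the conclusions by a somewhat different and more unified route than the paper's. Both arguments rest on the identity \eqref{eid2} with $G<0$, but you convert it once and for all into the statement that $\psi/\varphi$ is strictly increasing on $(a,x_0)$ with initial value $\psi'(a)/\varphi'(a)$, and then read off all three cases from that single monotone function. The paper instead works pointwise: (i) it gets $\psi(x_0)>0$ in one line by noting that $\varphi'(x_0)\le 0$ would force $\varphi'(x_0)\psi(x_0)\ge 0$ if $\psi(x_0)\le 0$, contradicting \eqref{eid2} evaluated at $x=x_0$, whereas you first exclude $\psi(x_0)=0$ via the non-definiteness dichotomy (the same device the paper uses inside Theorem~\ref{th3} and Lemma~\ref{lem1c}) and then deduce the sign from the fact that an increasing $\psi/\varphi$ cannot blow down to $-\infty$ at $x_0^-$; (ii) it obtains uniqueness of the zero in case (2) by appealing to the separation property of Theorem~\ref{th3}, whereas your strict monotonicity gives at-most-one zero for free, making that step self-contained; and (iii) in case (3) it only asserts that one of the first two alternatives occurs, whereas your computation $\lim_{x\to a^+}\psi/\varphi=\psi'(a)/\varphi'(a)=0$ together with strict increase pins down that alternative (1) always holds --- a correct sharpening of the stated conclusion. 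The only point worth flagging is that your exclusion of $\psi(x_0)=0$ silently invokes the standing sign pattern of $r$ from Theorem~\ref{th3} (positive, then identically zero, then negative), which the lemma's statement does not repeat; since the paper's own proof of Lemma~\ref{lem1c} relies on the same unstated hypotheses, this is a matter of bookkeeping rather than a gap.
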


\begin{proof} By Lemma~\ref{lemii} we can let $x_0$, where $a< x_0 < b$, be the smallest zero of $\varphi$ (assuming there is a zero at all). Since $\varphi^\prime(a)>0$ and $x_0$ is the smallest zero of $\varphi(x)$, then $\varphi(x)>0$ on $(a, x_0)$. Assume, for the moment that $\psi(x_0)\leq 0$.  Geometrical considerations give us that $\varphi^\prime(x_0)\psi(x_0)\geq 0$. Applying \eqref{eid2} with $x = x_0$ we obtain a contradiction. Thus, it must be the case that $\psi(x_0)> 0$. 

There are now three possibilities, each of which may occur (see Example~\ref{exa1}, Example~\ref{exa2} and Example~\ref{exa3}, below).

Case 1. Let $\psi^\prime(a)>0$. Then $\psi$ increases in a right-neighborhood of $x=a$ and we will show that $\psi(x)\neq 0$ in $(a, x_0)$, so that $\psi(x)> 0$ there.

For assume, on the contrary, that $\psi(x_1)=0$ at some smallest point $x_1$ where $a<x_1<x_0$ and $\psi(x)>0 $ for $a<x<x_1$.  From \eqref{eid2} we know that 
$$\varphi^\prime(x_1)\psi(x_1) - \varphi(x_1)\psi^\prime(x_1) = {\rm Im}\, \lambda \int_{a}^{x_1} r |y|^2\, \ dx<0.$$
which gives a contradiction since $\varphi(x_1)>0$ and $\psi^\prime(x_1) < 0$ (Note that $\psi^\prime(x_1) = 0$ is excluded from the previous display.) Hence, in this case, $\psi(x) \neq 0$ whenever $a< x<x_0$, i.e., $\psi(x)>0$ in $(a, x_0]$.

Case 2. Let $\psi^\prime(a)<0$. In this case, $\psi$ decreases in a right neighborhood of $x=a$ and so $\psi(x)$ must be negative. On the other hand  $\psi(x_0) >  0$ and so $\psi(x_3)=0$ at some smallest point $x=x_3$, where $a<x_3<x_0$ and $\psi(x)<0$ on $(a, x_3)$. Note that $\psi$ cannot vanish again for $x>x_3$ for, if  $\psi(x_4)=0$, where $x_3<x_4<x_0$, then this would violate the separation of internal zeros property in Theorem~\ref{th3} as $\varphi(x)>0$ in $[x_3, x_4]$.

Case 3.  Let $\psi^\prime(a) = 0$. This final case is shown by noting that $\psi(x)$ must be of one sign in a right-neighborhood of $x=a$ after which one of the proofs in cases 1 and 2 must apply, depending on the sign.

The alternative (``respective") result is handled from the first by replacing $y$ by $-y$, or $\varphi, \psi$ by $- \varphi, - \psi$, a transformation which keeps \eqref{eid} invariant, and the eigenvalue the same.
\end{proof}
\begin{corollary}\label{co4}
Let $G(x)<0$ for all $x\in (a, b).$ Let $\psi^{\prime}(a)<0$, and let $x_0 \in (a, b)$ be the smallest zero of $\psi(x)$.
\begin{enumerate}
\item If $\varphi^{\prime}(a)>0$, then $\varphi(x)>0$ in $(a, x_0)$
\item If $\varphi^{\prime}(a)<0$, then $\varphi(x)=0$  exactly once in $(a, x_0)$
\item If $\varphi^{\prime}(a)=0$, then one of the first two cases must occur.
\end{enumerate}
\end{corollary}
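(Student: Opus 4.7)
The plan is to deduce Corollary~\ref{co4} directly from Lemma~\ref{lem1b} by applying the latter not to $y$ itself but to the scalar multiple $iy = -\psi + i\varphi$. Since $iy$ is just a nonzero complex multiple of a complex ghost, it too is a non-real eigenfunction of \eqref{e1}-\eqref{e1p} corresponding to the same eigenvalue $\lambda$; and because $|iy|^2 = |y|^2$, the function $G$ in \eqref{gfn} coincides for $y$ and $iy$, so the standing assumption $G(x) < 0$ on $(a, b)$ transfers without change.

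Writing $iy = \tilde{\varphi} + i\tilde{\psi}$, we have $\tilde{\varphi} = -\psi$ and $\tilde{\psi} = \varphi$, so the smallest zero of $\tilde{\varphi}$ in $(a, b)$ is exactly the smallest zero $x_0$ of $\psi$. The hypothesis $\psi'(a) < 0$ becomes $\tilde{\varphi}'(a) = -\psi'(a) > 0$, placing $iy$ squarely in the main (non-``resp.'') setting of Lemma~\ref{lem1b}. Moreover $\tilde{\psi}'(a) = \varphi'(a)$, so the sign trichotomy on $\varphi'(a)$ in Corollary~\ref{co4} corresponds exactly to the sign trichotomy on $\tilde{\psi}'(a)$ in Lemma~\ref{lem1b}.

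The three cases of Corollary~\ref{co4} now follow by translating each conclusion of Lemma~\ref{lem1b}. In case (1), $\varphi'(a) > 0$ gives $\tilde{\psi}(x) = \varphi(x) > 0$ on $(a, x_0]$, hence on $(a, x_0)$; in case (2), $\varphi'(a) < 0$ gives that $\tilde{\psi} = \varphi$ has exactly one zero in $(a, x_0)$; and in case (3), $\varphi'(a) = 0$ forces $\tilde{\psi}'(a) = 0$, and Lemma~\ref{lem1b}(3) then guarantees that one of the first two subcases occurs. The only thing really to check is that the substitution $y \mapsto iy$ preserves every ingredient needed by Lemma~\ref{lem1b}---the differential equation, the boundary condition $y(a) = 0$, the function $G$, and the smallest-zero location---each of which is immediate. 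I therefore expect no genuine obstacle; the content of Corollary~\ref{co4} is really just Lemma~\ref{lem1b} under the symmetry of rotating the complex eigenfunction by $\pi/2$.
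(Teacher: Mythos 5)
Your proposal is correct and follows exactly the paper's own argument: the paper likewise proves Corollary~\ref{co4} by replacing $y=\varphi+i\psi$ with the eigenfunction $iy=-\psi+i\varphi$ and applying Lemma~\ref{lem1b}, noting that the substitution preserves \eqref{eid} (equivalently, $G$). Your version simply spells out the dictionary $\tilde\varphi=-\psi$, $\tilde\psi=\varphi$ in more detail.
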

\begin{proof} Replace $y=\varphi+i\psi$ by $iy = -\psi+i\varphi$ which is another eigenfunction satisfying \eqref{e1}-\eqref{e1p} and apply the lemma to this eigenfunction. Observe that this change of variable also keeps \eqref{eid} invariant.
\end{proof}

\begin{remark}\label{r2a}
Observe that the proof of Lemma~\ref{lem1b} fails if $x_0=b$ as then $G(b)=0$ so that the right-hand side of \eqref{eid2} is zero. 
\end{remark}

\begin{remark}\label{r2}
Note that if $\psi^\prime(a) = 0$, then one of Case 1, 2 above must apply, i.e., either $\psi$ increases or decreases in a right neighborhood of $x=a$. In either case we have the same results.  See Example~\ref{exa1} for an application of Corollary~\ref{co4}(1)  and  Example~\ref{exa2} for an application of Lemma~\ref{lem1b} (2).
\end{remark}

Results analogous to Lemma~\ref{lem1b} and Corollary~\ref{co4} may be formulated in the event that information is available at the right endpoint. Two samples follow, others may be readily formulated using the change of variables in Lemma~\ref{lem1b}  and Corollary~\ref{co4}. We sketch the proofs.

\begin{lemma}\label{lem5}
Let $G(x)<0$ for all $x\in (a, b).$ Let $\varphi^{\prime}(b) < 0$, (resp. $\varphi^{\prime}(b) > 0$) and let $x_0 \in (a, b)$ be the largest zero of $\varphi(x)$.
\begin{enumerate}
\item If $\psi^{\prime}(b)>0$, then $\psi(x)<0$ in $(x_0, b)$ (resp.  if $\psi^{\prime}(b)<0$ then $\psi(x) > 0$ in $(x_0, b)$ )
\item If $\psi^{\prime}(b)<0$, then $\psi(x)=0$  exactly once in $(x_0, b)$ (resp. if $\psi^{\prime}(b)>0$, then $\psi(x)=0$  exactly once in $(x_0, b)$ ) 
\item If $\psi^{\prime}(b)=0$, then one of the first two cases must occur.
\end{enumerate}
\end{lemma}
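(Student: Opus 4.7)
The plan is to mirror Lemma~\ref{lem1b} with the roles of $a$ and $b$ interchanged. The single tool throughout is the Sturmian identity \eqref{eid} applied on $[x_1,b]$: since $y(b)=0$ and, for a complex ghost, $\int_a^b r|y|^2\,dt=0$, \eqref{eid} becomes
$$\varphi'(x_1)\psi(x_1)-\varphi(x_1)\psi'(x_1)=-({\rm Im}\,\lambda)\int_{x_1}^{b}r|y|^2\,dt=({\rm Im}\,\lambda)\,G(x_1),$$
so \eqref{eid2} remains the workhorse; only its deployment at the right endpoint is new.

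First I would fix the sign of $\varphi$: $\varphi(b)=0$ and $\varphi'(b)<0$ give $\varphi>0$ in a left-neighborhood of $b$, and by maximality of $x_0$ this extends to $\varphi>0$ on $(x_0,b)$. For Case~(1), $\psi'(b)>0$ forces $\psi<0$ in a left-neighborhood of $b$; if $\psi$ vanished anywhere in $(x_0,b)$, I would take the largest such zero $x_1$, so that $\psi<0$ on $(x_1,b)$ and hence $\psi'(x_1)\leq 0$. Evaluating \eqref{eid2} at $x_1$ gives $-\varphi(x_1)\psi'(x_1)=({\rm Im}\,\lambda)G(x_1)$, whose left side is $\geq 0$ while its right side is $<0$, a contradiction.

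For Case~(2), $\psi'(b)<0$ forces $\psi>0$ just left of $b$. Evaluating \eqref{eid2} at $x_0$, where $\varphi(x_0)=0$, yields $\varphi'(x_0)\psi(x_0)=({\rm Im}\,\lambda)G(x_0)<0$; combined with $\varphi\geq 0$ on $[x_0,b]$ this forces $\varphi'(x_0)>0$ and $\psi(x_0)<0$. The intermediate value theorem then produces at least one zero of $\psi$ in $(x_0,b)$, and two such zeros would demand an interior zero of $\varphi$ between them by Theorem~\ref{th3}, contradicting the maximality of $x_0$; hence the zero is unique. Case~(3), $\psi'(b)=0$, I would reduce to Case~(1) or~(2) exactly as in Lemma~\ref{lem3}, Case~3: equation \eqref{psiq} together with ${\rm Im}\,\lambda\neq 0$ and $r<0$ on $(d,b)$ rules out $\psi\equiv 0$ in any left-neighborhood of $b$, so $\psi$ has a definite sign there, which selects the applicable earlier case. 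The \emph{respective} statements follow from $(\varphi,\psi)\mapsto(-\varphi,-\psi)$, which preserves \eqref{eid} while flipping the signs of $\varphi'(b)$ and $\psi'(b)$.

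The only delicate point is Case~(3): I need to know that $\psi$ cannot oscillate infinitely often as $x\uparrow b$, and Lemma~\ref{lem1c} is not directly applicable because its hypothesis excludes the endpoint. The same argument used there, however, still works: an accumulation of zeros of $\psi$ at $b$ together with the one already present would force $y(b)=y'(b)=0$ via the mean value theorem and the separation property of Theorem~\ref{th3}, hence $y\equiv 0$ by uniqueness for \eqref{e1}, which is impossible.
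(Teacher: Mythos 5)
Your proof is correct and follows essentially the same route as the paper: establish $\varphi>0$ on $(x_0,b)$, evaluate the Wronskian identity \eqref{eid2} at the relevant zero to fix signs, and invoke the interior separation property of Theorem~\ref{th3} for uniqueness. You additionally write out Case~(3) and the non-accumulation of zeros at $b$, which the paper leaves to the reader; both of your arguments there are sound.
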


\begin{proof} Let $a< x_0 < b$ be the largest zero at which $\varphi$ vanishes (again, assuming there is a zero at all). Since $\varphi^\prime(b)< 0$ and $x_0$ is the largest zero of $\varphi(x)$, then $\varphi(x)>0$ on $(x_0, b)$. Assume, for the moment that $\psi(x_0)> 0$.  Geometrical considerations give us that $\varphi^\prime(x_0)\psi(x_0)\geq 0$. Applying \eqref{eid2} with $x = x_0$ we obtain a contradiction. Thus, it must be the case that $\psi(x_0)\leq 0$. On the other hand, if $\psi(x_0) = 0$, then the left hand side of \eqref{eid2} is zero (at $x=x_0$) while the right hand is non-zero. This contradiction proves that $\psi(x_0) <  0$.

Case 1. Let $\psi^\prime(b)>0$. Then $\psi$ increases in a left-neighborhood of $x=b$ and we will show that $\psi(x)\neq 0$ in $(x_0, b)$, so that $\psi(x) <  0$ there.

For assume, on the contrary, that $\psi(x_1)=0$ at some largest point $x_1$ where $x_0<x_1<b$ and $\psi(x) < 0 $ for $x_1<x<b$.  From \eqref{eid2} we know that 
$$\varphi^\prime(x_1)\psi(x_1) - \varphi(x_1)\psi^\prime(x_1) = {\rm Im}\, \lambda \int_{a}^{x_1} r |y|^2\, \ dx<0.$$
which gives a contradiction since $\varphi(x_1)>0$ and $\psi^\prime(x_1) < 0$ (Note that $\psi^\prime(x_1) = 0$ is excluded from the previous display.) Hence, in this case, $\psi(x) \neq 0$ whenever $x_0< x<b$, i.e., $\psi(x) < 0$, $x \in (x_0, b)$.
\end{proof}

Case 2. If $\psi^\prime(b) < 0$ then $\psi$ decreases and is positive in a left-neighborhood of $x=b$. However, since  $\psi(x_0) <  0$, it follows that $\psi(x_5)=0$ for some $x_0 < x_5< b$, where $x_5$ is taken to be the largest (in fact, the only) such zero, i.e., $\psi(x)>0$ in $(x_5, b)$. If possible, assume that $\psi(x)$ vanishes again at $x=x_6$, where now $x_0< x_6<x_5$. This now violates the separation of zeros property in Theorem~\ref{th3} as these are internal zeros and $\varphi$ is not zero there. 

Case 3. This is shown as in the corresponding cases in the above lemmas, so the proofs are left to the reader.

\begin{corollary}
Let $G(x)< 0$ for all $x\in (a, b).$ Let $\psi^{\prime}(b)> 0$, and let $x_0 \in (a, b)$ be the largest zero of $\psi(x)$.
\begin{enumerate}
\item If $\varphi^{\prime}(b)>0$, then $\varphi(x)<0$ in $(x_0, b)$
\item If $\varphi^{\prime}(b)<0$, then $\varphi(x)=0$  exactly once in $(x_0, b)$
\item If $\varphi^{\prime}(b)=0$, then one of the first two cases must occur.
\end{enumerate}
\end{corollary}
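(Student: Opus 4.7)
The plan is to mimic the derivation of Corollary~\ref{co4} from Lemma~\ref{lem1b} and of Corollary~\ref{co5} from Lemma~\ref{lem3}: replace the eigenfunction $y=\varphi+i\psi$ by the companion eigenfunction $\tilde{y}=iy=-\psi+i\varphi$ and invoke Lemma~\ref{lem5} verbatim for $\tilde{y}$. Since $\tilde{y}$ satisfies \eqref{e1}-\eqref{e1p} with the same non-real eigenvalue $\lambda$, and since $|\tilde{y}|^{2}=|y|^{2}$, the function $G$ is unchanged; consequently the hypothesis $G(x)<0$ on $(a,b)$ and the Wronskian identity \eqref{eid2} carry over.

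With the identifications $\tilde{\varphi}=-\psi$ and $\tilde{\psi}=\varphi$, the hypothesis $\psi^{\prime}(b)>0$ becomes $\tilde{\varphi}^{\prime}(b)<0$, which is exactly the standing assumption of Lemma~\ref{lem5}. Moreover, the largest zero of $\psi$ equals the largest zero of $\tilde{\varphi}$, so $x_{0}$ of the corollary agrees with the $x_{0}$ of Lemma~\ref{lem5}. It then remains to read off the three cases by translating back: the condition $\tilde{\psi}^{\prime}(b)>0$ is $\varphi^{\prime}(b)>0$, and the conclusion $\tilde{\psi}<0$ in $(x_{0},b)$ is $\varphi<0$ in $(x_{0},b)$, yielding statement (1); likewise $\tilde{\psi}^{\prime}(b)<0$ becomes $\varphi^{\prime}(b)<0$ and the unique zero of $\tilde{\psi}$ in $(x_{0},b)$ is the unique zero of $\varphi$ there, giving (2); and the limiting case $\varphi^{\prime}(b)=0$ reduces to one of the first two by a sign-of-$\varphi$ argument in a left-neighbourhood of $b$, just as in Case~3 of Lemma~\ref{lem5}.

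The only point requiring any verification is that the identity \eqref{eid2} is genuinely invariant under the substitution $y\mapsto iy$. A direct computation gives $\tilde{\varphi}^{\prime}\tilde{\psi}-\tilde{\varphi}\tilde{\psi}^{\prime}=(-\psi^{\prime})\varphi-(-\psi)\varphi^{\prime}=\varphi^{\prime}\psi-\varphi\psi^{\prime}$, so both sides of \eqref{eid2} remain unchanged. I expect no serious obstacle: the substitution trick is exactly the one already used for Corollaries~\ref{co4} and \ref{co5}, and the only care needed is the bookkeeping of which derivative sign at $b$ is transported into which hypothesis of Lemma~\ref{lem5}.
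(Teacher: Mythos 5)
Your proposal is correct and is exactly the paper's argument: the paper's proof of this corollary is the one-line instruction ``Consider the eigenfunction $iy=-\psi+i\varphi$ as before,'' and your write-up simply carries out that substitution in detail, with the bookkeeping ($\tilde{\varphi}=-\psi$, $\tilde{\psi}=\varphi$, invariance of $G$ and of \eqref{eid2}) done correctly. No gaps.
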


\begin{proof} Consider the eigenfunction $iy = -\psi+i\varphi$ as before.
\end{proof}

We recall that if $G(x)>0$, analogous results may be obtained by replacing $\lambda$ by $-\lambda$ and $r$ by $-r$ in the above theorems and lemmas as the eigenfunction remains the same although the non-real eigenvalue is the negative of the original one corresponding to a different weight function, $r$.

\section{Examples}
In the examples that follow the coefficients $q, r$ appearing in \eqref{e1} are piecewise continuous rather than continuous, this isn't a real lack of generality as one expects the same results holding in this slightly weaker case of piecewise continuity as continuous functions on $[a, b]$ are dense in $L(a, b)$. Thus, solutions of problems with piecewise continuous coefficients may be approximated arbitrarily closely by a differential equation with continuous coefficients. 

In this section we shall make use of the following result,
\begin{lemma}\label{lem4}[\cite{vs}, Theorem 4.2.1] 
Let $p(x) > 0$ a.e. on $[a, b]$ , $q \in L(a, b)$  and $r(x)$ is a real-valued function defined on $[a, b]$ and such that $r(x)$ takes both signs on some subsets of positive measure. Then the eigenvalue problem 
\begin{equation}\label{e11}
-(p(x)y^\prime)^\prime+q(x)y=\lambda r(x)y, \quad y(a)=y(b)=0,
\end{equation}
possesses at most a finite number of non-real eigenvalues. If we let
M the number of pairs of distinct non-real eigenvalues of \eqref{e11},
N the number of distinct negative eigenvalues of \eqref{e11} with $r(x)\equiv 1$,  which we know is finite by classical Sturm-Liouville theory, [\cite{34}, \S 5.8, Theorem 2], then $M \leq  N$.
\end{lemma}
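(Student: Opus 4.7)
The plan is to compare the non-definite eigenvalue problem with the reference problem $Lu := -(pu')' + qu = \mu u$, $u(a)=u(b)=0$, via a shared real quadratic form, and then invoke a min-max principle. Write $Q(u) := \int_a^b (p(u')^2 + q u^2)\, dx$ for the quadratic form of $L$ on $H^1_0(a,b)$, and recall that $N$ equals the maximal dimension of a subspace on which $Q$ is strictly negative. For each non-real eigenvalue $\lambda_j = \mu_j + i\nu_j$ with $\nu_j>0$ and corresponding eigenfunction $y_j = \varphi_j + i\psi_j$ (whose real and imaginary parts are linearly independent by Lemma~\ref{lem0}), the first step is to multiply the equation by $\bar y_j$ and integrate by parts, obtaining $Q(\varphi_j) + Q(\psi_j) = \lambda_j \int_a^b r|y_j|^2\, dx$; taking imaginary parts yields $\int_a^b r|y_j|^2\, dx = 0$, and hence $Q(\varphi_j) + Q(\psi_j) = 0$.

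Second, for distinct upper-half-plane eigenvalues $\lambda_j \neq \lambda_k$, a Green's-identity computation yields $(\lambda_j - \bar\lambda_k)\int r\,y_j\bar y_k\, dx = 0$ and $(\lambda_j - \lambda_k)\int r\,y_j y_k\, dx = 0$; since neither prefactor vanishes, both $r$-integrals are zero, and separating real and imaginary parts forces $Q(\varphi_j,\varphi_k) = Q(\psi_j,\psi_k) = Q(\varphi_j,\psi_k) = Q(\psi_j,\varphi_k) = 0$. Thus the real planes $V_j := \mathrm{span}_{\R}(\varphi_j,\psi_j)$ are pairwise $Q$-orthogonal. Since eigenvectors of a linear operator for distinct eigenvalues are linearly independent, the $2M$ complex functions $\{y_j,\bar y_j\}_{j=1}^M$ are independent, hence so are the $2M$ real functions $\{\varphi_j,\psi_j\}_{j=1}^M$; therefore $W := \bigoplus_{j=1}^M V_j$ has real dimension $2M$, and $Q|_W$ is block-diagonal.

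Each block $Q|_{V_j}$ has trace zero by the first identity, so its signature is either $(1,1)$ or the form is identically zero on $V_j$. In the generic (nondegenerate) case one selects $w_j \in V_j$ with $Q(w_j)<0$; block-diagonality makes $\mathrm{span}_{\R}\{w_j\}_{j=1}^M$ an $M$-dimensional strictly-negative subspace for $Q$, and the min-max principle immediately yields $M \leq N$. Finiteness of the set of non-real eigenvalues is then a direct corollary, since $N<\infty$ by classical Sturm-Liouville theory.

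The main obstacle is the possibly degenerate case $Q|_{V_j}\equiv 0$. The remedy is to enlarge the negative sample by including the full plane $V_j$ for each such index; writing $I,J$ for the degenerate and nondegenerate indices, one obtains a subspace of dimension $|J| + 2|I|$ on which $Q\leq 0$. The maximal dimension of a non-positive subspace of $Q$ is $N + N_0$, where $N_0 \in \{0,1\}$ is the multiplicity of $0$ as an eigenvalue of $L$; hence $|J| + 2|I| \leq N + N_0$, which gives $M = |I| + |J| \leq N + N_0 - |I| \leq N$ in the degenerate case (using $|I|\geq 1$ and $N_0\leq 1$), so the bound $M\leq N$ holds in all cases.
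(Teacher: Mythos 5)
The paper does not actually prove Lemma~\ref{lem4}: it is quoted from [\cite{vs}, Theorem 4.2.1] and used as a black box, so there is no internal proof to compare against. Your variational argument is, in substance, the standard proof of this bound in the indefinite Sturm--Liouville literature: each conjugate pair of non-real eigenvalues contributes a two-dimensional real subspace $V_j=\mathrm{span}_{\R}(\varphi_j,\psi_j)$ on which the Dirichlet form $Q$ has zero trace (hence signature $(1,1)$ or identically zero), the subspaces attached to distinct pairs are $Q$-orthogonal by Green's identity, and the negative index of $Q$, which equals $N$ by the min--max principle, then dominates $M$. The identities $\int_a^b r|y_j|^2\,dx=0$ and $Q(\varphi_j)+Q(\psi_j)=0$, the cross-orthogonality relations obtained from $Q(y_j,v)=\lambda_j\int_a^b r\,y_j v\,dx$, and your treatment of the degenerate blocks $Q|_{V_j}\equiv 0$ via the non-positive index $N+N_0$ with $N_0\le 1$ (simplicity of Dirichlet eigenvalues) are all correct, and finiteness of the set of non-real eigenvalues does follow since the bound $M\le N$ holds for every finite collection of distinct pairs.

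One step deserves more care. You justify the linear independence of $\{y_j,\bar y_j\}_{j=1}^{M}$ by appealing to ``eigenvectors of a linear operator for distinct eigenvalues are independent,'' but \eqref{e11} is an eigenvalue problem for the pencil $L-\lambda R$, where $R$ is multiplication by $r$; since $r$ may vanish on a set of positive measure, $R$ need not be invertible and the $y_j$ are not eigenvectors of a single operator $R^{-1}L$. A standard repair: choose a real $c$ for which the self-adjoint Dirichlet realization of $-(py')'+(q-cr)y$ is boundedly invertible (all but countably many real $c$ work); then each $y_j$ is an eigenfunction of the single compact operator $(L-cR)^{-1}R$ with eigenvalue $(\lambda_j-c)^{-1}$, and these values are pairwise distinct over the collection $\{\lambda_j,\bar\lambda_j\}$, which gives the independence (hence $\dim_{\R}\bigoplus_j V_j=2M$) that your dimension count requires. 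With that patch the argument is complete and consistent with the cited source.
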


\begin{example}\label{exa1}
As an application of Lemma~\ref{lem3a} and the second conclusion in Lemma~\ref{lem1b}, consider the eigenvalue problem for the equation
$$-y^{\prime\prime}(x) - q\,y(x)= \lambda \sgn(x)y(x), \quad\quad y(-1)=0,\ y(1)=0.$$
where $\lambda \in \C$, $q>0$ is a constant, and $\sgn(x)= -1$ for $x \leq 0$ and $\sgn(x)=  1$ for $x > 0$. Its eigenfunctions are given by

$f\! \left(x\right)=\left\{\begin{array}{cc}
\sin\! \left(\sqrt{-q-\lambda}\, \left(1+x\right)\right) & -1\le x\le 0 
\\
 \sin\! \left(\sqrt{-q-\lambda}\right) \cos\! \left(x \sqrt{-q+\lambda}\right)+
\\
\frac{\sqrt{-q-\lambda}\, \cos\left(\sqrt{-q-\lambda}\right) \sin\left(x \sqrt{-q+\lambda}\right)}{\sqrt{-q+\lambda}} & 0\le x\le 1 
\end{array}\right.$

where the $\lambda\in \C$ satisfy the dispersion relation, see \cite{Jabon84},
\begin{gather}\sqrt{-z -q}\, \cos (\sqrt{-z -q}) \sin (\sqrt{z -q})\\ 
+\sqrt{z -q}\, \cos (\sqrt{z -q}) \sin (\sqrt{-z -q})=0\\
\end{gather}
This problem always has a doubly infinite sequence of real eigenvalues along with a finite number of complex eigenvalues,  \cite{abm86},\cite{Rich18}, one of which is, in the case where $q=40$, 
$\lambda \approx 26.9376 + 6.9215 i$. 

Writing $y = \varphi  + \psi i$, the graphs of $\varphi, \psi,$ and $G(x)$ are reproduced below,

\includegraphics[width=3in]{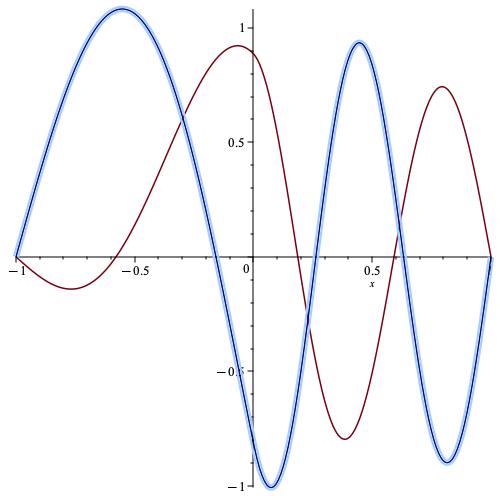}
\includegraphics[width=3in]{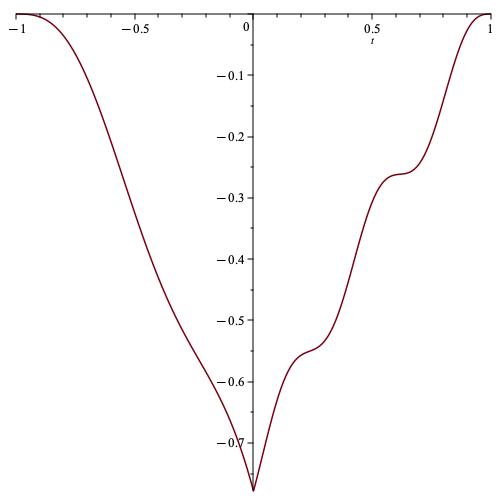}
\captionof{figure}{Here, $\varphi$ is the left-most curve of the graph on the left, $\psi$ being the other. The graph on the right is that of $G(x)$.
 \label{fig31}}
\vskip0.25in

\end{example}

\begin{example}\label{exa2}
consider the initial value problem for the equation
$$-y^{\prime\prime}(x) - (7+\lambda x)y(x)=0, \quad\quad y(-1)=0,\ y(1)=0,$$
where $\lambda \in \C$, is solvable in terms of Airy functions, so that if $y^\prime(-1)=1$, then the solution is
$$\frac{\left(-\mathrm{AiryAi}\! \left(\frac{\lambda -7}{\lambda^{\frac{2}{3}}}\right) \mathrm{AiryBi}\! \left(-\frac{\lambda  x+7}{\lambda^{\frac{2}{3}}}\right)+\mathrm{AiryBi}\! \left(\frac{\lambda -7}{\lambda^{\frac{2}{3}}}\right) \mathrm{AiryAi}\! \left(-\frac{\lambda  x+7}{\lambda^{\frac{2}{3}}}\right)\right) \pi}{\lambda^{\frac{1}{3}}}.$$

The associated Dirichlet problem on $[-1,1]$, i.e.,
$$y(-1)=0=y(1)$$
then gives a doubly infinite sequence of real eigenvalues along with a finite number of complex eigenvalues,  \cite{abm86},\cite{Rich18}, one of which is
$\lambda \approx 12.3076$i. 

Now, let $y(-1)=0,\ y^\prime(-1)=1-i$. Of course, the spectrum is the same but the eigenfunctions differ slightly. In this case, the eigenfunctions take the form,
$$y= -\frac{\mathrm{AiryBi}\! \left(\frac{\lambda -7}{\lambda^{\frac{2}{3}}}\right)\cdot \alpha \cdot \mathrm{AiryAi}\! \left(-\frac{\lambda \cdot x+7}{\lambda^{\frac{2}{3}}}\right)}{\mathrm{AiryAi}\! \left(\frac{\lambda -7}{\lambda^{\frac{2}{3}}}\right)}+\alpha \cdot \mathrm{AiryBi}\! \left(-\frac{\lambda \cdot x+7}{\lambda^{\frac{2}{3}}}\right)$$
where
$$\alpha =\frac{\left(i-1\right)\cdot \mathrm{AiryAi}\left(\frac{\lambda -7}{\lambda^{\frac{2}{3}}}\right) \pi}{\lambda^{\frac{1}{3}}}.$$
The plots of the real and imaginary parts of $y$ are given below in Figure~\ref{fig:airy32} and the qualitative behavior of these functions is in conformity with the conclusions obtained in Lemma~\ref{lem1b} (2) and Lemma~\ref{lem5} (1). 
\begin{figure}[h]
\begin{center}
\includegraphics[width=2in]{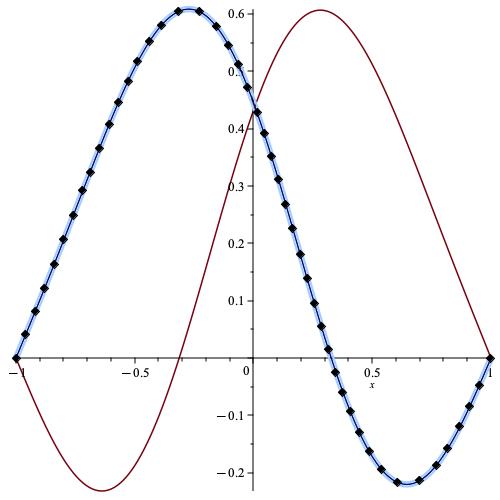}
\captionof{figure}{Here, $\varphi$ is the upper curve on the left.}
\label{fig:airy32}
\end{center}
\end{figure}
\end{example}

\begin{example}\label{exa3}
In this example we exhibit the oscillations in the more general case where the weight function vanishes on an interval, rather than just a point, as in \cite{Rich18}.

Here, the piecewise continuous weight function $r$ is given by

\[ r(x) = \left \{ \begin{array}{lll}
		1,	&	\mbox{if  \ \ $ -2 \leq x < -1$}, \\
		0 ,	&	\mbox{if  \ \ $-1 \leq x < 1 $}. \\
		-1 ,	&	\mbox{if  \ \ $1 \leq x \leq 2 $}. \\
		\end{array}
	\right.
\]

and $q(x)=q > 0$, is a constant function. In order to simplify the notation, the Dirichlet problem is defined by considering \eqref{e1} (with $q$ there replaced by $-q$ and $r$ by $-r$) subject to $y(-2)=0=y(2)$. Of course, this changes the sign of $G(x)$ in general, if we wish to maintain ${\rm Im\, }\lambda >0$.

In the following we use the following simplified notation: $\sin\sqrt{q-\lambda}= \sin(\sqrt{q-\lambda})$, $\sin2\sqrt{q} = \sin (2\sqrt{q})$, $\sin\sqrt{q-\lambda}x = \sin (\sqrt{q-\lambda}x)$ etc.

Thus, the solution of the problem satisfying $y(-2)=0$, $y^\prime(-2)=1$ is given by
$$y(x) = \frac{\sin 2\sqrt{q-\lambda}}{\sqrt{q-\lambda}}\left(\cot\ 2 \sqrt{q-\lambda} \sin \sqrt{q-\lambda}\, x+\cos\! \sqrt{q-\lambda}\, x\right),$$
for $-2 \leq x \leq -1$. 

On the other hand, on the interval $-2\leq x  < 1$, the solution $y$ is given by,
\begin{equation*}
y(x) = \frac{1}{\sqrt{q-\lambda}}\left ( \left \{A-B \right \} \sin \sqrt{q}x\right)  + C\cos\sqrt{q}x 
\end{equation*}
where 
\begin{eqnarray*}
A &= & \frac{(\sin\sqrt{q-\lambda}\, \sqrt{q} \cot \sqrt{q} +\sqrt{q-\lambda}\cos{\sqrt{q-\lambda})\cot\sqrt{q}}}{\sqrt{q}\csc\sqrt{q}},\\
B &=& \sin\sqrt{q-\lambda}\csc{\sqrt{q}},\\
C &=& \frac{1}{\sqrt{q}\sqrt{q-\lambda}\csc\sqrt{q}}\sin\sqrt{q-\lambda}\sqrt{q}\cot{\sqrt{q}}+ \sqrt{q-\lambda}\cos \sqrt{q-\lambda}.
\end{eqnarray*}
Finally, the expression for $y$ on $1 \leq x \leq 2$ is given by
\begin{eqnarray}
y(x) = \left ( \frac{1}{\sqrt{q+\lambda}\sqrt{q}\sqrt{q-\lambda}}\left \{ A^\prime + B^\prime\right \}\right) \sin \sqrt{q+\lambda}x + \nonumber \\
\left ( \frac{1}{\sqrt{q+\lambda}\sqrt{q}\sqrt{q-\lambda}}\left \{ C^\prime + D^\prime\right \}\right) \cos \sqrt{q+\lambda}x \label{disp}
\end{eqnarray}
where ...
\begin{eqnarray*}
A^\prime &= &\sqrt{q+\lambda}\sqrt{q} \sin\sqrt{q+\lambda}\cos2\sqrt{q}\sin \sqrt{q-\lambda} \\
&& + \sqrt{q-\lambda} \sqrt{q+\lambda}\sin\sqrt{q+\lambda}\sin 2\sqrt{q}\cos \sqrt{q-\lambda},\\
B^\prime &=& \sqrt{q}\sqrt{q-\lambda}\cos{\sqrt{q+\lambda}} \cos \sqrt{q-\lambda}\cos 2\sqrt{q}\\
&& - q\,\cos\sqrt{q+\lambda}\sin \sqrt{q-\lambda}\sin 2\sqrt{q},\\
C^\prime &=& \sqrt{q} {\sqrt{q+\lambda}} \cos\sqrt{q+\lambda}\cos 2\sqrt{q}\sin \sqrt{q-\lambda}\\
&& + \sqrt{q+\lambda}\sqrt{q- \lambda}\cos \sqrt{q+\lambda} \sin 2\sqrt{q} \cos \sqrt{q- \lambda} ,\\
D^\prime & = & q \sin 2\sqrt{q}\sin \sqrt{q+\lambda} \sin \sqrt{q- \lambda},\\
&& - \sqrt{q} \cos 2\sqrt{q} \sqrt{q-\lambda} \cos \sqrt{q-\lambda}\sin\sqrt{q+\lambda}.
\end{eqnarray*}

The dispersion relation for the eigenvalues of this non-definite problem is given by setting $y(2)=0$ in \eqref{disp} and solving for $\lambda \in \C$. Using the Maple$^\copyright$  \ subroutine {\it RootFinding} we find a non-real eigenvalue $\lambda \approx 6.29625i$ in the case where $q=8$. 
\begin{center}
\includegraphics[width=2in]{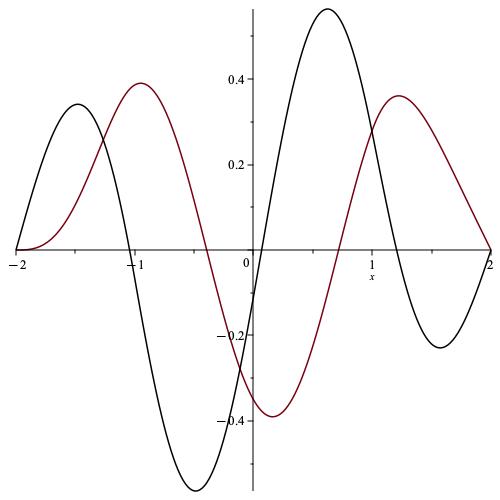}
\captionof{figure}{Here, the left-most curve is $\varphi$, the real part.}
\end{center}
It appears as if $G(x)<0$ in this case in which case Lemma~\ref{lem5} (1) applies.
\end{example}

\begin{example}\label{exa4}
Returning to Example~\ref{exa1} above, 
$$-y^{\prime\prime}(x) - q\,y(x)= \lambda \sgn(x)y(x), \quad\quad y(-1)=0,\ y(1)=0.$$
where now $q = \pi^2/4 + 2$, we get a non-real eigenvalue pair $\lambda \approx \pm 3.8741i$.
. 

Writing $y = \varphi  + \psi i$, the graphs of $\varphi, \psi,$ and $G(x)$ are reproduced below,
\begin{center}
\includegraphics[width=3in]{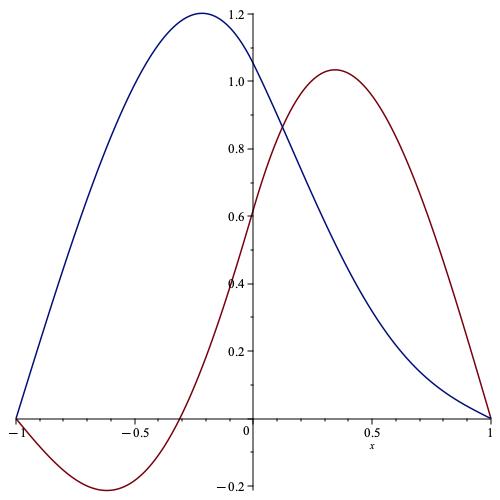}
\captionof{figure}{Here, $\varphi$ is the left-most curve of the graph on the left, $\psi$ being the other. Note the positivity of the real part in $(-1,1)$. Again, $G(x)<0$, here. 
\label{fig31p}}
\end{center}
\vskip0.25in

\end{example}

\section{Sturm theory for non-real eigenfunctions}

In this section the word "eigenvalue" refers to a non-real eigenvalue of \eqref{e1}-\eqref{e1p} and all questions here, unless otherwise specified, refer to the case where the weight function has a unique turning point around which $r(x)$ changes sign. For example, we can ask:

\begin{itemize}

\item[1.] Under what conditions does there necessarily exist an eigenfunction whose real or imaginary part does not vanish in $(a, b)$?

Now, \eqref{e1}-\eqref{e1p} always has a finite number of pairs of non-real eigenvalues by Lemma~\ref{lem4}, so this is a sensible question. In Example~\ref{exa2} there is a complex ghost whose real and imaginary part both have a zero in $(a, b)$.  In this case, a simple calculation shows that $M=1$ in Lemma~\ref{lem4}, and thus there can only be at most one pair of non-real eigenvalues.  Since this eigenvalue is one of the only possible pairs of eigenvalues of the problem, the answer is negative in this case.

On the other hand, in Example~\ref{exa4}, for $q = \pi^2/4 + 2$ we have an eigenvalue pair $\lambda \approx \pm 3.8741i$ with an eigenfunction having a real part that is seemingly positive in $(-1, 1)$. Once again, $M=1$ in Lemma~\ref{lem4} and thus this is the only non-real pair.

In relation to 1) above, we can ask for a weaker result, that is, 

\item [2.] Can a non-real eigenfunction $y(x, \lambda)$ vanish for some $x\in (a, b)$?

Generally speaking this cannot occur in the case of one turning point such as in Richardson \cite{Rich18} or, more generally, as in Theorem~\ref{th3}. This is because of the separation of the interior zeros of the real and imaginary parts of the eigenfunction.

However, the possibility of a non-real eigenfunction vanishing in the interior of an interval may be true in the case of two turning points as the results in \cite{Mervis2016} show. There, an example of a non-definite problem with two turning points was found in which a non-real eigenfunction vanished as a function of $x$ inside $(a, b)$. The result is therefore likely true in the case of more than two turning points, but this has not been verified yet. In \cite{abm82} it was shown that if the weight function has $n$ turning points, then any non-real eigenfunction can vanish at most $n-1$ times in $(a, b)$.

\item [3.] Under what conditions can we infer the existence of an eigenfunction, corresponding to an eigenvalue of largest modulus, that does not vanish in $(a, b)$?

In the case of one turning point or as in Theorem~\ref{th3} this is clear as eigenfunctions cannot vanish in the interior because of the separation property. In particular, see Example~\ref{exa4} where we have only one-non-real pair. This statement is then reminiscent of the Perron-Frobenius Theorem \cite{hj} in the finite dimensional (matrix) case. (See Corollary 3.2 in \cite{maps} for a related result.) Since our boundary value problem always has a finite number of pairs of non-real eigenvalues by Lemma~\ref{lem4}, this too is a ponderable question. To what extent is it true in cases where there are multiple turning points?
\end{itemize}

\end{document}